\newcolumntype{C}{>{$}c<{$}}
\newcolumntype{L}{>{$}l<{$}}
\newcolumntype{R}{>{$}r<{$}}
 \newtheorem{thm}{Theorem}[section]
 \newtheorem{prop}[thm]{Proposition}
 \theoremstyle{definition}
 \newtheorem{defn}[thm]{Definition}
 \theoremstyle{remark}
 \newtheorem{rem}[thm]{Remark}
 \theoremstyle{definition}
 \newtheorem{ex}[thm]{Example}
\newtheorem{prob*}{Problem}
 \DeclareMathOperator{\Supp}{Supp}
 \DeclareMathOperator{\Sing}{Sing}
 \newcommand{\PP}{\mathbb{P}}
\def\move-in{\parshape=1.75true in 5true in}
\def\a{\bigskip \par \noindent}
\def\b{\par \noindent}
\def\dd{\medskip \par \noindent}
\long\def\eatit#1{}
\def\C{\Bbb C}
\newcommand{\prfend}{\hbox to7pt{\hfil}
\par\vskip-\baselineskip\hbox to\hsize
{\hfil\vbox {\hrule width6pt height6pt}}\vskip\baselineskip}
\long\def\eatit#1{}
\def\C{\Bbb C}
\def\l{\lambda}
\def\e{\varepsilon}
\begin{document}

\title{Remarks on double points of plane curves}

\author[A. Gimigliano]{Alessandro Gimigliano}
\address[Alessandro Gimigliano]{Dipartimento di Matematica, Università di Bologna, Italy}
\email{alessandr.gimigliano@unibo.it }

\author[M. Id\`a]{Monica Id\`a}
\address[Monica Id\`a]{Dipartimento di Matematica, Università di Bologna, Italy}
\email{monica.ida@unibo.it }

\maketitle


\begin{abstract}
We study the relation between the type of a double point of a plane curve and the curvilinear 0-dimensional subschemes of the curve at the point. An Algorithm related to a classical procedure for the study of double points via osculating curves is described and proved. Eventually we look for a way to create examples of rational plane curves with given singularities $A_s$.
 \end{abstract}


\section{Introduction}

This note is dedicated to the study of double points of plane curves, either using their implicit equation or, in the case of rational curves, their parameterization. This is quite a classical subject in Algebraic Geometry; the aim of the present paper is to study the structure of a double point of a plane curve via the curvilinear 0-dimensional subschemes of the curve at the point, and to give, in the case of plane curves defined by their implicit equation, an algorithm which, following a classical procedure, allows to describe the structure of a double point.
We work on the complex field.
\par \medskip We recall that a singularity of type $A_s$ for a plane curve is a double point that can be resolved via $r$ blow ups if $s=2r-\e$, $\e =0,1$ and the desingularization yields two points if $\e =1$ and only one if $\e =0$. In the following, given a curve $D$ smooth at a point $Q$, with $mQ$ we denote the curvilinear 0-dimensional scheme of length $m$ supported at $Q$ and contained in $D$.

\medskip If $C \subset \PP^2$ is a degree $n$ integral rational curve, to give a parameterization means to see $C$ as the projection of a rational normal curve $C_n\subset \PP^n$ (see section 3 for details). In Theorem 4.3 of \cite{BGI} we show that the point $P$ of $C$ is a double point of type $A_{2r-\e}$ if and only if there is a 0-dimensional curvilinear scheme $X\subset C$ of length $r$, projection of a certain curvilinear scheme $Y$ of length $2r$ on $C_n$ (supported on two points if $\e =1$, on one point if $\e=0$) and $X$ is ``maximal" with respect to this property.
 In section 2 of the present paper we generalize the result to any integral plane curve (Theorem \ref{2mtom notrat}). In the same section we study the intersection multiplicities of a plane curve $C$ with a double point $P$ with curves smooth at $P$ (Proposition \ref{Lemma}; this will be useful in the last section) and we give another characterization of $A_s$ singularities through the possible length of the curvilinear schemes supported at $P$ and contained in $C$.

\medskip In section 3 we describe an algorithm which classifies double points on any plane curve; this algorithm is based on the classical method of studying the osculating curves (parabolas of degree $r$) to a curve at a double point. We give this algorithm in detail and also its justification since, although classically known, we find its main references (e.g. see \cite{C}) a bit cumbersome in justifying the procedure.

\medskip In section 4 we build an example illustrating the previous algorithm.

\medskip In section 5 we give a brief summary of the techniques, introduced in \cite {BGI}, useful for studying singular points on a rational plane curve when the parameterization is known.

\medskip In section 6, with \cite{CoCoA}, we build an example which illustrates how to build a plane rational curve with a double point of a chosen type using projection techniques, and how to use the techniques of section 5 to study the singularities of a rational curve, given its parameterization.

\a
\section{Double points and curvilinear schemes}
The following theorem is a generalization of \cite{BGI},Theorem 4.3; its proof is essentially the same as in the rational case:

\begin{thm}\label{2mtom notrat}   Let $ C \subset \mathbb{P}^2$ be a curve with normalization $\pi: \tilde C \to C$ and let $P$ be a double point for $C$. Then:
 \begin{enumerate}
\item\label{a} $P$ is an $A_{2m-1}$ singularity if and only if
\begin{enumerate}
 \item\label{ai}  $\pi^{-1}(P) = \{Q_1, Q_2\} $,
 \item\label{aii}  $\pi (mQ_1|_{\tilde C}\cup mQ_2|_{\tilde C})= X $, where $X$ is a curvilinear scheme of length $m$ contained in $C$,
 \item\label{aiii}  $m$ is the maximum integer for which \eqref{aii} holds.
\end{enumerate}

\item\label{b}  $P$ is an $A_{2m}$ singularity if and only if
\begin{enumerate}
\item\label{bi} $\pi^{-1}(P) = \{Q \}$,
\item\label{bii} $\pi (2mQ|_{\tilde C}) = X $, where $X$ is a curvilinear scheme of length $m$ contained in $C$,
\item\label{biii}  $m$ is the maximum integer for which \eqref{bii} holds.
\end{enumerate}

\end{enumerate}
\end{thm}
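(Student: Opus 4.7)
The plan is to follow the strategy of \cite{BGI}, Theorem 4.3, reducing the whole statement to a purely local (analytic) computation at $P$ that nowhere uses rationality of $C$. Since $P$ is a double point of a plane curve, it is analytically equivalent to a singularity of type $A_s$ for some $s\ge 1$: this is a standard fact about planar singularities of multiplicity $2$ (one can cite Newton--Puiseux or the local classification of corank-$1$ hypersurface singularities). Hence it is enough to verify the two equivalences when $C$ has, in suitable formal coordinates at $P$, the normal form $y^2=x^{s+1}$, after which condition (ai) resp.\ (bi) distinguishes the parity of $s$ and the remaining conditions pin down the integer $m$.

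For the direct implications I would work in the completed local ring $\widehat{\bO}_{C,P}$. In case $A_{2m-1}$ the equation factors as $(y-x^m)(y+x^m)=0$, so $C$ has two smooth branches parameterised by $t_i\mapsto(t_i,\pm t_i^m)$, and $\widehat{\bO}_{C,P}$ sits inside $\CC[[t_1]]\times\CC[[t_2]]$ as the pairs $(f,g)$ whose first $m$ Taylor coefficients coincide. The scheme-theoretic image $\pi(mQ_1\cup mQ_2)$ is defined by the ideal $\bigl((t_1^m)\times(t_2^m)\bigr)\cap\widehat{\bO}_{C,P}$, and a direct count shows that the quotient is $m$-dimensional and, via $x\mapsto(t_1,t_2)$, isomorphic to $\CC[x]/(x^m)$, so the image is indeed curvilinear of length $m$. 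In case $A_{2m}$ one has $\widehat{\bO}_{C,P}\cong\CC[[t^2,t^{2m+1}]]\subset\CC[[t]]$; the image of $2mQ$ is cut out by the elements of $t$-order at least $2m$, and the quotient is again isomorphic to $\CC[x]/(x^m)$ via $x\mapsto t^2$.

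To prove maximality I would push the same calculation one step further. Enlarging $m$ to $m+1$ on both branches (resp.\ $2m$ to $2m+2$) forces the second generator $y=(t_1^m,-t_2^m)$ (resp.\ $y=t^{2m+1}$) to survive in the quotient in addition to the class of $x$. The resulting local ring has embedding dimension $2$ rather than $1$, is of length $m+2$, and is not a quotient of $\CC[t]$, so it is not curvilinear and cannot agree with a curvilinear scheme of length $m+1$. Hence $m$ is the maximum integer satisfying (aii), respectively (bii).

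The converses are then formal: any double point on a plane curve is some $A_s$, so if (ai)--(aiii) hold the fact that $\#\pi^{-1}(P)=2$ rules out even $s$, forcing $s=2k-1$; applying the forward direction to that $k$ together with the maximality clause gives $k=m$. Case (b) is identical. The only place where \cite{BGI} exploited rationality is in identifying the scheme-theoretic image $\pi(mQ_1\cup mQ_2)$; this is the step I expect to require the most care in the general setting, but, as sketched above, it is a purely local matter in $\widehat{\bO}_{C,P}$ and is insensitive to whether $C$ is rational.
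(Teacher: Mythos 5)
Your proposal is correct and follows essentially the same route as the paper: reduce by analytic equivalence to the normal form $y^2=x^{s+1}$, compute the image of $mQ_1\cup mQ_2$ (resp.\ $2mQ$) explicitly on the two smooth branches (resp.\ on the cuspidal parameterization $t\mapsto(t^2,t^{2m+1})$), obtaining the curvilinear scheme of ideal $(y,x^m)$, and check that one step further yields a length-$(m+2)$ scheme of embedding dimension $2$, hence not curvilinear, with the converse then formal from the $A_s$ classification of double points. Your phrasing of the computation inside $\widehat{\mathcal O}_{C,P}\subset\CC[[t_1]]\times\CC[[t_2]]$ (resp.\ $\CC[[t^2,t^{2m+1}]]$) is just a repackaging of the paper's ideal-theoretic calculation, not a genuinely different argument.
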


\begin{proof} 

It is known that a {\it normal form} for an $A_s$ singularity is given by the curve $\Gamma_s \subset \PP^2$ of equation: $y^2z^{s-1}-x^{s+1}=0$ at the point $P = [0,0,1]$ (e.g. see \cite{refHa, refKP}), i.e. if $C$ has an $A_s$ singularity at $P$, then it is  analytically isomorphic to $\Gamma_s$ at $P$, and has the same multiplicity sequence as $\Gamma_s$ at $P$. Hence we will work on these curves first. 

\dd Case \eqref{a}: Let $(C,P)$ be an $A_{2m-1}$ singularity. Hence $\Gamma_{2m-1}$, in affine coordinates, is the union of the two smooth curves $\Gamma$: $\{y-x^m=0\}$ and $\Gamma '$: $\{y+x^m=0\}$, while $p=(0,0)$. The normalization of $\Gamma \cup \Gamma '$ is  $\psi :\tilde{\Gamma} \cup \tilde{\Gamma '}\rightarrow \Gamma \cup \Gamma '$, where $\Gamma \cup \Gamma '$ is the union of two disjoint lines and the inverse image of $p$ is given by two points, $q\in \tilde{\Gamma}$ and  $q'\in \tilde{\Gamma '}$ , so $\eqref{ai}$ is true. Since $\psi |_{\tilde{\Gamma}} \to \Gamma $ and $\psi |_{\tilde{\Gamma'}} \to \Gamma' $ are two isomorphisms, it is clear that  $\psi(mq|_{\tilde{\Gamma}}) = mp|_{\Gamma}$ and $\psi(mq'|_{\tilde{\Gamma '}}) = mp|_{\Gamma '}$.

Now let's notice that $\Gamma \cap \Gamma'$ is the curvilinear scheme $Z$ whose ideal is $(y,x^m)$, hence $mp|_{\Gamma} = mp|_{\Gamma'} = Z$, and $\eqref{aii}$ is true.  On the other hand, if we consider $\psi((m+1)q|_{\tilde{\Gamma}} \cup (m+1)q'|_{\tilde{\Gamma '}})$, we get the scheme $(m+1)p|_{\Gamma} \cup (m+1)p|_{\Gamma '}$ which corresponds to the ideal $(x^{m+1},xy,y^2)$, and this scheme is not curvilinear, so \eqref{aiii} holds.

 \dd Case \eqref{b}: Let $(C,P)$ be an $A_{2m}$ singularity (hence \eqref{bi} holds). We have that $\Gamma_{2m}$, in affine coordinates, is the irreducible curve $y^2-x^{2m+1}=0$, hence a parameterization (which is also a normalization) for it is $\phi : \mathbb{A}^1 \to \Gamma_{2m}$, where $\phi(t) = (t^2,t^{2m+1})$ and $p=(0,0)$ is such that $\phi^{-1}(p)=q$.  The ring map corresponding to $\phi$ is:  
$$
\tilde{\phi} : \frac{K[x,y]}{(y^2-x^{2m+1})} \to K[t], \quad \overline{x} \mapsto t^2, \quad \overline{y} \mapsto t^{2m+1}.
$$ 

The scheme $2mq|_{\mathbb{A}^1}$ corresponds to the ideal $(t^{2m})$, and $\phi^{-1} ((t^{2m}))= (y,x^m) $, hence \eqref{bii} is true. 

On the other hand, the scheme $2(m+1)q|_{\mathbb{A}^1}$ corresponds to the ideal $(t^{2m+2})$, and $\phi^{-1} ((t^{2m+2}))= (x^{m+1},xy,y^2) $, hence \eqref{bii} is true. 

\dd To check that the ``~if~'' part of statements \eqref{a} and \eqref{b} holds, just consider that \eqref{ai}, respectively \eqref{bi}, determines if the singularity is of type $A_{2h}$, respectively $A_{2h-1}$, while \eqref{aii} and \eqref{aiii}, respectively \eqref{bi} and \eqref{biii}, force $h$ to be equal to $m$. 

Now let us notice that, being $\Gamma_s$  at $p$ analytically isomorphic to $C$ at $P$, when we consider $\Gamma_s$ and $C$ as analytic complex spaces, there exist open euclidean neighborhoods $U$ of $p$ and $V$ of $P$ such that $U\cap \Gamma_s$ and $V\cap C$ are biolomorphically equivalent. Since the statement is of local nature, this is enough to conclude.
 \end{proof}
 
\a We denote the intersection multiplicity of two curves $C$ and $D$ at a point $P$ by $i(C,D,P)$. The Proposition below relates the type $A_s$ of a double point $P\in C$ with the value of $i(C,D,P)$ for a curve $D$ smooth at $P$.

\begin{prop}\label{Lemma}  Let $C$ be a plane reduced curve and $P\in C$ a double point of $C$. Let $D$ be a plane curve, smooth at $P$. Then:

\dd i) Assume $P$ is an $A_{2r-1}$ or an $A_{2r}$ singularity. If $i(C,D,P) \leq 2r$ , then it is an even number.

\dd ii) If $P$ is an $A_{2r-1}$ singularity for $C$, there are curves $D_1$ and $D_2$ smooth at $P$ such that $i(C,D_j,P)\geq 2r+1$ for $j=1,2$ and $i(D_1,D_2,P)=r$.

\dd iii)  If $P$ is an $A_{2r}$ singularity for $C$, then $i(C,D,P)\leq 2r+1$, and there exist curves smooth at $P$ which attain the equality. If $D_1, D_2$ are curves such that $i(C,D_j,P)= 2r+1$ for $j=1,2$, then $i(D_1,D_2,P)>r$.

\dd iv)  Let $O$ be an $A_{s}$ singularity for $C$, with $s=2r-1$ or $s=2r$, $s\geq 2$, and suppose that the tangent of $C$ at $O$ is not the $y$-axis. Then any  curve  $D$, smooth at $O$ and such that $i(C,D,O)\geq 2r+1$,  has a local analytic equation of the form  $$y=\sum_{i= 2}^{r-1} c_ix^i +c_rx^r+\sum_{i\geq r+1} c_ix^i $$ where   $\sum_{i\geq r+1} c_ix^i $ is convergent, and $c_2,...,c_{r}$ are fixed if $s=2r$, while $c_2,...,c_{r-1}$ are fixed and there are only two (different) possibilities for  $c_r$ if  $s=2r-1$.
\end{prop}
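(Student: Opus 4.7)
\emph{Setup and parts (i)--(iii).} The plan is to study $i(C,D,O)$ via a local analytic model for $C$ at $O$. Since the $A_s$ type is an analytic invariant, for (i)--(iii) one may replace $(C,O)$ by the normal form $(\Gamma_s,p)$ with $\Gamma_s:y^2=x^{s+1}$. A smooth $D$ not tangent to $C$ gives $i(C,D,O)=2$, already even; assume henceforth $D:y=\phi(x)$ is tangent, with leading order $j\ge 2$. For $s=2r-1$, $C=\Gamma\cup\Gamma'$ with $\Gamma:y=x^r$ and $\Gamma':y=-x^r$, so $i(C,D,O)=\mathrm{ord}_x(x^r-\phi)+\mathrm{ord}_x(x^r+\phi)$; an elementary case analysis shows that both summands equal $\min(j,r)$ unless $j=r$ and the leading coefficient of $\phi$ is $\pm 1$, in which case the sum exceeds $2r$. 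For $s=2r$, the parameterization $(t^2,t^{2r+1})$ gives $i(C,D,O)=\mathrm{ord}_t(t^{2r+1}-\phi(t^2))=2j$ if $j\le r$, and $2r+1$ if $j\ge r+1$. Together these prove (i) and the first part of (iii). For (ii), take $D_1:y=x^r+x^{r+1}$ and $D_2:y=-x^r+x^{r+1}$; direct substitution gives $i(C,D_j,O)=2r+1$ and $i(D_1,D_2,O)=r$. For the second part of (iii), two $D_j$ realizing $i(C,D_j,O)=2r+1$ satisfy $\phi_j=O(x^{r+1})$, whence $i(D_1,D_2,O)=\mathrm{ord}_x(\phi_1-\phi_2)\ge r+1>r$.

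\emph{Setup for part (iv).} The coefficients $c_i$ of $\phi$ live in the given coordinates on $C$, so the normal-form reduction is not directly available. Instead, work in coordinates at $O$ in which the tangent of $C$ is the $x$-axis (possible by the ``not $y$-axis'' hypothesis). By Weierstrass preparation, up to a local unit $C$ is cut out by $y^2+a(x)y+b(x)$, and the double-point condition with tangent cone $y^2$ forces $a(x)=O(x^2)$, $b(x)=O(x^3)$. Completing the square via $\tilde y:=y+a(x)/2$ and $\tilde g(x):=a(x)^2/4-b(x)$ yields $\tilde y^2=\tilde g(x)$, with $\mathrm{ord}_x\tilde g=s+1$ under the $A_s$ hypothesis; write $\tilde g(x)=u_0x^{s+1}+u_1x^{s+2}+\cdots$ with $u_0\neq 0$. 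A smooth $D:y=\phi(x)$ tangent to $C$ becomes $\tilde y=\tilde\phi(x):=\phi(x)+a(x)/2=\sum_{i\ge 2}b_ix^i$ in the new coordinates, and
\[
i(C,D,O)=\mathrm{ord}_x\bigl(\tilde\phi(x)^2-\tilde g(x)\bigr).
\]

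\emph{Coefficient analysis and conclusion.} The coefficient of $x^{2k}$ in $\tilde\phi^2$ is $b_k^2+Q_k(b_2,\dots,b_{k-1})$ for a polynomial $Q_k$ vanishing at the origin, and the odd-indexed coefficients of $\tilde\phi^2$ vanish automatically once the lower even ones do. Imposing $\mathrm{ord}_x(\tilde\phi^2-\tilde g)\ge 2r+1$ and walking up the orders $x^4,x^6,\ldots$ inductively: for $s=2r$ (so $\tilde g$ contributes nothing through $x^{2r}$) one obtains $b_2=\cdots=b_r=0$, with no constraint on $b_{r+1},b_{r+2},\ldots$; for $s=2r-1$ (so $\tilde g$ has leading term $u_0x^{2r}$) one obtains $b_2=\cdots=b_{r-1}=0$ and, at $x^{2r}$, the quadratic $b_r^2=u_0$, with two distinct roots $b_r=\pm\sqrt{u_0}$. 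Translating back via $\phi=\tilde\phi-a/2$: the coefficients $c_i=-a_i/2$ for $2\le i\le r-1$ (resp.\ $2\le i\le r$) are determined by $C$; for $s=2r-1$ the coefficient $c_r=\pm\sqrt{u_0}-a_r/2$ takes exactly two values; the higher $c_i=b_i-a_i/2$ are free, and the tail $\sum_{i\ge r+1}c_ix^i$ converges because $\phi$ does. The principal technical point is precisely this back-translation together with the quadratic equation that produces the two roots for $c_r$ in the $A_{2r-1}$ case.
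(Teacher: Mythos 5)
Your proposal is correct, and for parts (i)--(iii) it is essentially the paper's argument: reduce to the analytic normal form $y^2=x^{s+1}$ and compute orders of series. The only cosmetic difference there is that you split the computation according to parity (factoring $y^2-x^{2r}=(y-x^r)(y+x^r)$ and using the parameterization $(t^2,t^{2r+1})$ in the odd case), whereas the paper computes uniformly with the coefficients $\alpha_k$ of $\bigl(\sum_{i\ge 2}c_ix^i\bigr)^2$ and the equivalence $(\dagger)$; your explicit curves $y=\pm x^r+x^{r+1}$ match the paper's two families in (ii). Where you genuinely diverge is part (iv). The paper disposes of it in one line (``the general case is attained taking a local biholomorphy sending the curve to $y^2-x^s=0$''), which leaves implicit why the coefficients $c_2,\dots,c_{r-1}$ (resp.\ $c_r$) of $D$ \emph{in the original coordinates} are determined by $C$, since a biholomorphism does not act coefficient-wise on graphs. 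You instead stay in the given coordinates (after the harmless shear making the tangent the $x$-axis, which does not alter the $c_i$ with $i\ge 2$), apply Weierstrass preparation to write the local equation as a unit times $y^2+a(x)y+b(x)$ with $a=O(x^2)$, $b=O(x^3)$, complete the square, and read off $\mathrm{ord}_x\tilde g=s+1$ from the $A_s$ hypothesis; the coefficient analysis then gives $c_i=-a_i/2$ for the low indices and the quadratic $b_r^2=u_0$ producing exactly two values of $c_r$ in the $A_{2r-1}$ case. This buys a self-contained and fully coordinate-honest justification of the ``fixed coefficients'' claim (which is what the algorithm's justification in Proposition \ref{Justification} actually uses), at the cost of a slightly longer argument; the paper's route is shorter but leans on the analytic classification and on an unspoken jet-triangularity of the change of coordinates. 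One small point to keep in mind in (ii)--(iii): like the paper, you transport the constructed smooth germs back through the analytic isomorphism, so strictly speaking they are analytic germs (truncating the graphs at high order yields algebraic curves with the same contact, if one wants honest curves); this is the same level of rigor as the paper and not a gap specific to your argument.
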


\begin{proof}    The curve $C$ at $P$ is analytically isomorphic, at $O$,  to the curve $y^2-x^s=0$, where $s=2r$ if $P$ is an $A_{2r-1}$ singularity and $s=2r+1$ if $P$ is an $A_{2r}$ singularity. Since the intersection multiplicity of two curves is an analytic invariant, from now on we study the multiplicity intersection at $O$ of each of these curves with a curve $D$ smooth at $O$.
\a {\it i)} If $C$ and $D$ meet transversally, $i(C,D,O)=2$ so we are done. 
\b If the tangent of $D$ at $O$ is $y=0$, by the analytic implicit function Theorem (see for example \cite{H}, theorem 2.1.2 p.24), the curve $D$ is locally given by an analytic equation  $y=\sum_{i\geq 2} c_ix^i$.
\b  Denoting by $\omega(S)$ the order of a series $S$, the intersection multiplicity  $i(C,D,O)$ is $$\omega ((\sum_{i\geq 2}c_ix^i)^2-x^s) \quad \eqno{(*)}$$ 
\b Since $\omega ((\sum c_ix^i)^2)= 2\omega (\sum c_ix^i)$ is always even, if $i(C,D,O)\leq 2r$ we have that $\omega ((\sum_{i\geq 2}c_ix^i)^2-x^s)$ is even in both cases $s=2r, 2r+1$, hence statement $ 1)$ is proved.
We have
$$(\sum_{i\leq 2} c_ix^i)^2 = \sum_{k\geq 4} \alpha_k x^k {\quad \rm where \quad} \alpha_k =  \left\{ \begin{array}{l}
2\sum _{j=2}^{{k\over2}-1}c_jc_{k-j}+c_{k\over2}^2 \quad {\rm if \;} k {\; \rm even \;}\\
\\
2\sum _{j=2}^{{k-1\over2}}c_jc_{k-j} \quad {\rm if \;} k {\; \rm odd \;}
\end{array}\right.$$

\dd Since $\omega (\sum_{k\geq 4} \alpha_k x^k) $ is always even, we have $\alpha_4=\dots=\alpha_{2m}=0 \Rightarrow \alpha_{2m+1}=0$, and $$\alpha_4=\dots=\alpha_{2m}=\alpha_{2m+1}=0 \quad \iff \quad c_2=\dots=c_m=0 \eqno{(\dagger)}$$

\a {\it ii)}  Let $O$ be an $A_{2r-1}$ singularity, i.e. $C: y^2-x^{2r}=0$, and assume $i(C,D,O) \geq 2r+1$. Since $i(C,D,O)= \omega(\sum_{k\geq 4} \alpha_k x^k -x^{2r} )$, we must have $\alpha_4=\dots=\alpha_{2r-2}=0$, this implying $ \alpha_{2r-1}=0$, and $\alpha_{2r}=1$, hence $c_2=\dots=c_{r-1}=0$ and $c_r^2=1$. Hence  there are two families of curves smooth at $O$ and with intersection multiplicity $\geq  2r+1$ with $C$ at $O$, namely, those with a local equation of the form $y=x^r +\sum _{i\geq r+1}c_ix^i$ or $y=-x^r +\sum _{i\geq r+1}c_ix^i$. If $D_1$ is a curve of the first family and $D_2$ of the second, we have $i(D_1,D_2,O)=r$.

\a {\it iii)}  Let $O$ be an $A_{2r}$ singularity, i.e. $C: y^2-x^{2r+1}=0$, and assume $i(C,D,O) \geq 2r+1$. Since $i(C,D,O)= \omega(\sum_{k\geq 4} \alpha_k x^k -x^{2r+1} )$, 
we must have $\alpha_4=\dots=\alpha_{2r}=0$ and this implies  $ \alpha_{2r+1}=0$, hence the coefficient of $x^{2r+1}$ in the series $\sum_{k\geq 4} \alpha_k x^k -x^{2r+1}$ is always $-1$, so that the order of the series has to be $2r+1$; in other words,  $i(C,D,O)= 2r+1$. In this case $(\dagger)$ says that 
$D$ has a local equation of the form $y=\sum _{i\geq r+1}c_ix^i$. 
Hence, if $D_1, D_2$ are two such curves, we find $i(D_1,D_2,O)>r$.

\a {\it iv)}  The statement follows immediately from what is written above when $C$ has equation $y^2-x^s=0$. The general case is attained taking a local biholomorphy sending the curve to $y^2-x^s=0$.   

\end{proof}

\medskip
The following Theorem \ref{curvi}  gives a description of a double point $P$ of a plane curve in terms of the curvilinear 0-dimensional subschemes of the curve supported at $P$.

\medskip
\begin{thm}\label{curvi}  Let $C$ be a plane reduced curve and $P\in C$ a double point of $C$. Then $P$ is an $A_{2r}$ singularity if and only if no curvilinear scheme supported at $P$ of length $> 2r+1$ is contained in $C$. More precisely,

\dd i)  $P$ is an $A_{2r-1}$ singularity for $C$ if and only if  for any $\ell \geq 1$ there is a curvilinear scheme supported at $P$ of length $\ell$ contained in $C$;

\dd ii) $P$ is an $A_{2r}$ singularity if and only if for any $\ell \leq 2r+1$ there is a curvilinear scheme supported at $P$ of length $\ell $ contained in $C$, and no curvilinear scheme supported at $P$  of length $> 2r+1$ contained in $C$.
\end{thm}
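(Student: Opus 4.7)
The plan is to reduce the statement to a dictionary between curvilinear $0$-schemes supported at $P$ and smooth curve germs through $P$, and then invoke Proposition~\ref{Lemma} together with the analytic normal forms of $A_s$ singularities recalled in the proof of Theorem~\ref{2mtom notrat}.

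First I would establish the local dictionary: every curvilinear scheme $X$ of length $\ell$ supported at $P$ in $\PP^2$ is of the form $\ell P|_D$ for some smooth curve germ $D$ through $P$ (uniquely determined by $X$ up to contact of order $\ell$), and $\ell P|_D\subset C$ if and only if either $D$ is an analytic branch of $C$ at $P$ or $i(C,D,P)\geq \ell$. In local coordinates where $D=\{y=\varphi(x)\}$, the ideal of $\ell P|_D$ in $\mathcal{O}_{\PP^2,P}$ is $(y-\varphi(x),\,x^\ell)$, and containment $\ell P|_D\subset C$ reduces to the order condition $\omega(g(x,\varphi(x)))\geq \ell$ for a local equation $g=0$ of $C$, which is exactly $i(C,D,P)\geq \ell$ (or $D\subset C$).

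With this dictionary in hand, the two ``only if'' directions are short. For (i), an $A_{2r-1}$ point has the analytic equation $(y-x^r)(y+x^r)=0$, so $C$ carries two smooth branches $D_\pm$ at $P$, and $\ell P|_{D_\pm}\subset D_\pm\subset C$ realizes every length $\ell\geq 1$. For (ii), an $A_{2r}$ point has the analytic equation $y^2-x^{2r+1}=0$, which is irreducible in $\CC\{x,y\}$ and whose unique branch, parameterized by $t\mapsto(t^2,t^{2r+1})$, is singular for $r\geq 1$; hence no smooth germ is contained in $C$, and any $\ell P|_D\subset C$ forces $D\not\subset C$ and $i(C,D,P)\geq \ell$. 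Proposition~\ref{Lemma}(iii) then bounds $\ell$ by $2r+1$ and furnishes a smooth $D$ with $i(C,D,P)=2r+1$, from which every $\ell\leq 2r+1$ is realized by truncating $(2r+1)P|_D$.

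The two ``if'' directions follow by the standard fact that every double point of a reduced plane curve is analytically equivalent to some $A_s$ with $s\geq 1$. If curvilinear schemes of arbitrarily large length exist in $C$ at $P$, then $P$ cannot be $A_{2r'}$ by the ``only if'' in (ii), so $P$ is $A_{2r-1}$ for some $r$. If the supremum of realized lengths equals exactly $2r+1$, then the ``only if'' in (i) rules out odd indices, so $P$ is $A_{2r'}$ for some $r'$, and the ``only if'' in (ii) forces $2r'+1=2r+1$, i.e.\ $r'=r$. The point I expect to require the most care is the claim that no smooth germ sits inside $C$ in the $A_{2r}$ case: this rests on analytic irreducibility of $y^2-x^{2r+1}$ in $\CC\{x,y\}$ together with the observation that a smooth germ contained in an analytically irreducible germ would have to coincide with it, contradicting that the unique branch has multiplicity $2$ at $P$ for $r\geq 1$.
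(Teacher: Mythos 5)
Your proposal is correct and takes essentially the same route as the paper: both reduce to the analytic normal form $y^2-x^s$, realize all admissible lengths by explicit smooth germs (the two branches in the odd case, a maximal-contact curve or the ideal $(y,x^\ell)$ in the even case), and obtain the upper bound in the $A_{2r}$ case from Proposition~\ref{Lemma}, part iii), via the fact that a curvilinear scheme of length $\ell$ contained in $C$ lies on a smooth germ $D$ with $i(C,D,P)\geq \ell$ --- which is exactly what the paper's colength computation establishes. The only genuine addition is that you make explicit the ``if'' directions (using that every double point of a reduced plane curve is an $A_s$ singularity) and the exclusion of $D$ being a branch via analytic irreducibility of $y^2-x^{2r+1}$, points the paper leaves implicit.
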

\begin{proof}

The curve $C$ at $P$ is analytically isomorphic  to the curve $\Gamma_s: y^2-x^s=0$ at $O$, where $s=2r$ if $P$ is an $A_{2r-1}$ singularity and $s=2r+1$ if $P$ is an $A_{2r}$ singularity. 
\dd {\it i)}  If $P$ is an $A_{2r-1}$ singularity, let $\ell \geq 1$, and consider the 0-dimensional curvilinear scheme $Z$ of ideal $(y-x^r,x^\ell)$ supported at $O$; $Z$ has length $\ell$ and is contained in $\Gamma_{2r}$, since $y^2-x^{2r}\in (y-x^r,x^\ell)$.

\dd {\it ii)}  If $P$ is an $A_{2r}$ singularity, let $1\leq \ell \leq 2r+1$, and consider the 0-dimensional curvilinear scheme $Z$ of ideal $(y,x^\ell)$ supported at $O$; $Z$ has length $\ell$ and is contained in $\Gamma_{2r+1}$, since $y^2-x^{2r+1}\in (y,x^\ell)$.
\dd Now assume that a 0-dimensional curvilinear scheme $Y$ of length $ h \geq 2r+2$ is contained in $\Gamma_{2r+1}$; $Y$ being curvilinear, there is a curve $D: g(x,y)=0$ smooth at $O$ and such that $Y\subset D$, so that $I_Y=(g)+(x,y)^h$. Since $Y\subset \Gamma_{2r+1}$, we have $y^2-x^{2r+1} \in I_Y$, hence $(y^2-x^{2r+1}, g)+ (x,y)^h=(g)+(x,y)^h$; 
by \ref{Lemma} $ i(\Gamma_{2r+1},D,O) \leq 2r+1$, so that:
$$2r+1 \geq  i(\Gamma_{2r+1},D,O) = {\rm dim} \left( \C[x,y]/(y^2-x^{2r+1}, g)\right)_{(x,y)}\geq $$ $$\geq  {\rm dim} \left( \C[x,y]/(y^2-x^{2r+1}, g)+(x,y)^h\right)_{(x,y)}= {\rm dim} \left( \C[x,y]/( g)+(x,y)^h\right)_{(x,y)}= h$$
Hence a 0-dimensional curvilinear scheme of length $ \geq 2r+2$ cannot be contained in $\Gamma_{2r+1}$.

\end{proof}

\begin{rem} \rm Notice that not all the 0-dimensional subschemes of $C$ appearing in the statement of \ref{curvi} are cut on $C$ by a curve smooth at $P$, for example by \ref{Lemma} if the length is $\leq 2r$ then any such curve has an even intersection mutiplicity with $C$, while clearly there are subschemes of odd length.
\end{rem}

\a 
\section{An algorithm for determining the type of a double point via implicit equation}

What we will expose here is an algorithm to determine the nature of a double point on a plane curve $C$, given the implicit equation of $C$. The matter is classically known, but we prefer to give it here since it can be a bit "forgotten", especially for younger mathematicians, and also because our main reference (\cite{C}) is a bit cumbersome when giving the justification of this procedure: for this reason we prefer to describe it in a more algorithmic way and to give a rather simple justification of it. 

\medskip

We work with a reduced algebraic curve $C\subset \PP^2=\PP^2_{\mathbb C}$ of degree $n$, given by a homogeneous polynomial  $F\in {\mathbb C}[x_0,x_1,x_2]_n$, and we suppose that the point $O = [0:0:1]$ is a double point for $C$.  Let
$$
  F = \sum _{i+j+k=n}a_{ij}x_0^ix_1^jx_2^{k} .
$$
where each $a_{ij}\in {\mathbb C}$, $i,j,k\in \{0,...,n\}$ and $k=n-i-j$.
\medskip
Since what we want is to study the curve at $O$, we can work in the affine chart $\{x_2\neq 0\}$, with affine coordinates $x =\frac {x_0}{x_2}$, $y=\frac{x_1}{x_2}$, so $O=(0,0)$ and the affine curve is defined by the polynomial:
$$f(x,y) = \sum a_{ij}x^iy^j  $$
The point $O$ being a double point for $C$, we have  $a_{00} = a_{10}=a_{01}= 0$ and $(a_{20}, a_{11}, a_{02}) \neq (0,0,0)$:
$$
f(x,y) = a_{20}x^2 + a_{11}xy + a_{02}y^2 + a_{30}x^3 + a_{21}x^2y + a_{12}xy^2 + a_{03}y^3 + ...     .
$$
We make use of auxiliary curves $\Gamma ^t_{\Lambda}$ given by the equations :
$$
\Gamma ^t_{{\Lambda}}:\   y={\l_1}x + {\l_2}x^2 + ... + {\l_t}x^t 
$$
where $t\geq 1$, ${\Lambda}:= (\l_1,...,\l_t) \in {\mathbb C}^t$ . We look among them for curves osculating the curve $C$ at $O$; notice that the curve $\Gamma ^t_{{\Lambda}}$ is smooth at $(0,0)$ and its degree is $\leq t$ (some of the $\l_i$'s can be zero).

\dd We denote with  $i(C,D,P)$ the intersection multiplicity of two curves $C$,$D$ at the point $P$, and we set $$R(C,\Gamma ^t_{\Lambda}):= f(x,\l_1x + \l_2x^2 + ... + \l_tx^t) \in {\mathbb C}[x]$$ so that $i(C,\Gamma ^t_{\Lambda},O)$ is the least degree assumed by $x$ in $R(C,\Gamma ^t_{\Lambda})$.

\dd The aim of the procedure is to establish the type $A_s$  of the double point $O$. Here we illustrate how to get this result in several steps, before we give a formal algorithm to do that:

\bigskip
\noindent {\it Step 1:} {\bf Analysis via lines $\Gamma^1_\Lambda$.} Since $\; \Gamma ^1_{{\Lambda}}:\   y={\l_1}x\;$ we have
$$
R(C,\Gamma ^1_{\Lambda}) = (a_{20} + a_{11}\l_1 + a_{02}\l_1^2)x^2 + (a_{30} + ... + a_{03}\l_1^3)x^3 + ...
$$
The coefficient of $x^2$ is zero when $$a_{20} + a_{11}\l_1 + a_{02}\l_1^2=0 \eqno{(\star)}$$
There are two cases:

\dd {\it Step 1a:}  If $a_{11}^2-4a_{20}a_{02}\neq 0$, then $(\star)$ has two distinct roots $\l_{11}, \l_{12}$, so there are exactly two tangent lines  $\Gamma ^1_{\l_{11}}$, $\Gamma ^1_{\l_{12}}$ for which   $i(C,\Gamma ^1_{\l_{1j}},O) \geq 3$ ($j=1,2$), \underline{$O$ is a double point} \underline{ $A_1$ } (an ordinary node) for $C$ and the analysis ends here. 

\dd {\it Step 1b:}  If $a_{11}^2-4a_{20}a_{02}= 0$, then $(\star)$ has one double root ${\bar \l_1}$, so there is only one tangent line $\Gamma ^1_{\bar \l_1}$ for which  $i(C,\Gamma ^1_{\bar \l_1},O) \geq 3$, namely $\Gamma ^1_{\bar \l_1}: y=-\frac{a_{11}}{2a_{02}}x$.  Then $O$ is a non-ordinary singularity.
\dd We perform a linear change of coordinates so that the tangent at the double point $O$ to the transformed curve, which we still call $C$, is $y=0$, i.e. $\bar \l_1 =0$; the equation of $C$ then looks like:
$$
C: f(x,y) = y^2 + a_{30}x^3 + a_{21}x^2y + a_{12}xy^2 + a_{03}y^3 + ...     \eqno{(*)}
$$
If we have  $i(C,\Gamma ^1_{\bar \l_1},O) = 3$, i.e. if $a_{30}\neq 0$, then \underline{$O$ is an ordinary cusp $A_2$}, and the analysis ends here. Otherwise we go to step 2.

\a {\it Step 2:} {\bf Analysis via conics
 $\Gamma^2_\Lambda$.}  We are assuming that $C$ is given by $(*)$ with $a_{30} = 0$, that is, $i(C,\Gamma ^1_{\bar \l_1},O) \geq 4$. Consider  the pencil of conics $$\Gamma ^2_{{\Lambda}}:\   y={\bar \l_1}x+\l_2 x^2, \quad {\Lambda}= (\bar \l_1,\l_2)$$ all tangent to $C$ at $O$; since $\bar \l_1=0$, we get
 $$\Gamma ^2_{{\Lambda}}:\   y=\l_2 x^2, \quad {\Lambda}= (0,\l_2)$$ 
 $$
R(C,\Gamma ^2_{\Lambda}) =  (a_{40} + a_{21}\l_2 + \l_2^2)x^4 + (a_{50} + a_{31}\l_2 + a_{12}\l_2^2)x^5 + ...
$$
The coefficient of $x^4$ is zero when $$a_{40} + a_{21}\l_2 + \l_2^2=0 \eqno{(\star\star)}$$
There are two cases:   

\dd {\it Step 2a:} If the discriminant $a_{21}^2 - 4a_{40} \neq 0$, then  there are exactly two osculating conics  $\Gamma ^2_{ \Lambda 1}$, $\Gamma ^2_{\Lambda 2}$ for which   $i(C,\Gamma ^2_{\Lambda j},O) \geq 5$ ($j=1,2$), so \underline{$O$ is a double point $A_3$} (a tacnode), since the two conics separate the two branches of $C$ at $O$. And the analysis stops here. 

\dd {\it Step 2b:} If $a_{21}^2 - 4a_{40} = 0$, $(\star\star)$ has a unique root $\bar \l_2=-\frac{a_{21}}{2}$, so there is only one osculating conic  $\Gamma ^2_{\bar \Lambda}$ with $i(C,\Gamma ^2_{\bar  \Lambda},O) \geq 5$, the one with $\bar \Lambda=(0, \bar \l_2)$.
\b If  $i(C,\Gamma ^2_{\bar  \Lambda},O)= 5$, i.e. $a_{50} + a_{31}\bar \l_2 + a_{12}\bar \l_2^2\neq 0$, then \underline{$O$ is a cusp $A_4$}, and the analysis ends here. Otherwise we go to step 3.

\a {\it Step 3:} {\bf Analysis via cubics $\Gamma^3_\Lambda$.} We are assuming that $C$ is given by $(*)$ with $a_{30} = 0$, $a_{21}^2 - 4a_{40} = 0$, $2\bar \l_2 +a_{21}=0$ and $a_{50} + a_{31}\bar \l_2 + a_{12}\bar \l_2^2 =0$, that is,  $i(C,\Gamma ^2_{\bar \l_2},O) \geq 6$. Consider the pencil of cubics $$\Gamma ^3_{\Lambda}: y= \bar \l_2x^2 + \l_3x^3, \quad  \Lambda = (0,\bar \l_2,\l_3) $$
all having in common the tangent at $O$ (i.e. the tangent $y=0$ to $C$ at $O$), and the osculating conic at $O$ (i.e. the osculating conic $y=\bar \l_2x^2$ to $C$ at $O$). We get:
 $$
R(C,\Gamma ^3_\Lambda) =  (a_{40} + a_{21}\bar \l_2 + \bar \l_2^2)x^4 + (a_{50} + a_{31}\bar \l_2 + a_{12}\bar \l_2^2 + 2\bar \l_2\l_3 + a_{21}\l_3)x^5 + ...
$$
since the coefficients of $x^4$ and $x^5$ are $0$, we have $i(C,\Gamma ^3_\Lambda,O) \geq 6$.
\b The coefficient of $x^6$ is zero when
$$
\l_3^2 +(2a_{12}\bar \l_2+a_{31})\l_3 + a_{03}\bar \l_2^3 + a_{22}\bar \l_2^2 + a_{41}\bar \l_2+a_{60} =0  \eqno{(\star \star \star)}
$$
We again have two cases:

\dd {\it Step 3a:} If the discriminant $(2a_{12}\bar \l_2+a_{31})^2 - 4(a_{03}\bar \l_2^3 + a_{22}\bar \l_2^2 + a_{41}\bar \l_2+a_{60})\neq 0$,
there are exactly two osculating cubics  $\Gamma ^3_{ \Lambda 1}$, $\Gamma ^3_{\Lambda 2}$  for which $i(C,\Gamma ^3_{\Lambda j},O) \geq 7$ ($j=1,2$),  and \underline{ $O$ is a singularity $A_5$} (an oscnode).

\dd {\it Step 3b:} If the discriminant $(2a_{12}\bar \l_2+a_{31})^2 - 4(a_{03}\bar \l_2^3 + a_{22}\bar \l_2^2 + a_{41}\bar \l_2+a_{06})= 0$, $(\star\star\star)$ has a unique root $\bar \l_3$, so there is only one osculating cubic  $\Gamma ^3_{\tilde \Lambda}$ with $i(C,\Gamma ^3_{\tilde  \Lambda},O) \geq 7$, the one with $\tilde \Lambda=(0, \bar \l_2, \bar \l_3)$

\b If $i(C,\Gamma ^3_{\tilde \Lambda},O)= 7$, then \underline{$O$ is an $A_6$ cusp} for $C$. Otherwise we go to Step 4 where we use quartics $$\Gamma ^3_{\Lambda}: y= \bar \l_2x^2 + \bar \l_3x^3+ \l_4x^4, \quad  \Lambda = (0,\bar \l_2,\bar \l_3,\l_4)$$
 and we go on in the same way. This process will end at some point (see the justification of the Algorithm: if $O$ is an $A_{2r-1}$ or an $A_{2r}$ we will stop at Step r) .

\a  The procedure above is described in the following Algorithm \ref{Algorithm1True}. Notice that in the exposition above when the point is not a node we imposed $a_{02}\neq 0, a_{20}=a_{11}=0$ to have that the double tangent at $O$ is $y=0$ so to simplify computations.  This is not necessary, hence  in the algorithm we just impose $a_{02}\neq 0$.

\begin{algorithm}[H]\caption{Study of the double points of a plane curve $C: \sum a_{ij}x_0^ix_1^jx_2^{n-i-j}=0$}\label{Algorithm1True}
\a  \textbf{Input}: $F =  \sum a_{ij}x_0^ix_1^jx_2^{n-i-j} \in \mathbb{C}[x_0,x_1,x_2]_n$, $n>0$, $P=[a,b,c]$, $F(P)=0$.\\
 \a \textbf{Output}: State if $P$ is a double point for $C$ and its type: $A_m$.

\a \begin{boxedminipage}{125mm}

    \begin{algorithmic}[1]

	\STATE\label{Alg1Step0} {\it STEP 0)} \  Perform a linear change of coordinates so to have $P = [0,0,1]$; work in the affine chart $\{x_2\neq 0\}$, with $f =  \sum a_{ij}x^iy^j$, and $a_{00}=0$.
	
	   \dd If $(a_{01},a_{10})\neq(0,0)$: $P$ {\bf is a simple point for} $C$, with tangent $a_{01}x+a_{10}y=0$. {\bf STOP}
	
	   \dd If $(a_{01},a_{10})=(0,0)$ and $(a_{20},a_{11},a_{02})=(0,0,0)$: $P$ {\bf is a point of multiplicity $\geq 3$ for} $C$. {\bf STOP}. 
	   
	   \dd If $(a_{01},a_{10})=(0,0)$ and $(a_{20},a_{11},a_{02})\neq (0,0,0)$: $P$ {\bf is a double point for} $C$: {\bf go to Step 1}.
	
	  \a  \STATE\label{Alg1Step1}{\it STEP 1)} Perform a linear change of coordinates so to have $a_{02}\neq 0$.
	 Set $\Lambda := (\l_1)$ and consider $$R(C,\Gamma^1_{\Lambda}) :=f(x, \l_1x)= (a_{20}+a_{11}\l_1+a_{02}\l_1^2)x^2 + \dots$$

	 \STATE\label{Alg1Step1a}{\it STEP 1-a)} \ If  $\Delta^2_1(\Lambda) := a_{11}^2-4a_{20}a_{02}\neq 0$, then there exist $\l_{11}\neq \l_{12}$ with $i(C,\Gamma^1_{(\l_{11})},P)\geq 3$, $i(C,\Gamma^1_{(\l_{12})},P)\geq 3$ and $P$ {\bf is a double point for $C$ of type $A_1$} (ordinary node).\ {\bf STOP}.
	
	\a	\STATE\label{Alg1Step1b}{\it STEP 1-b)}\ If  $\Delta^2_1(\Lambda) = a_{11}^2-4a_{20}a_{02}= 0$, then there is a unique $\bar{\l}_1$ with $i(C,\Gamma^1_{(\bar{\l}_1)},P)\geq 3$.
	\dd {\it Step 1-$b_1$)} If $i(C,\Gamma^1_{(\bar{\l}_1),P)}= 3$, then $P$ {\bf is a double point for $C$ of type $A_2$} (ordinary cusp).\ {\bf STOP}.    
	\dd {\it Step 1-$b_2$)} If $i(C,\Gamma^1_{(\bar{\l}_1)},P) \geq 4$: {\bf go to Step 2}.
		
\medskip	
For $r\geq 2$, let 	
\a \STATE\label{Alg1Stepr}{\it STEP r)} \ Let $\Lambda = (\bar{\l}_1,\bar{\l}_2,...,\bar{\l}_{r-1}, \l_r)$ and
   $$\Gamma^r_{ \Lambda} : y=\bar{\l}_1 x +\bar{\l}_2x^2+...+\bar{\l}_{r-1}x^{r-1}+\l_r x^r $$ where the values $\bar{\l}_1,\dots,\bar{\l}_{r-1}$ come from Steps {\it 1-$b_2$,\dots, (r-1)-$b_2$}. Let $$R(C,\Gamma^r_{\Lambda}) :=f(x,\; \bar{\l}_1x+\bar{\l}_2x^2+...+\bar{\l}_{r-1}x^{r-1}+\l_r x^r )$$
 We have $i(C,\Gamma^r_{{\Lambda}},P)\geq 2r$, so the least power of $x$ appearing in $R(C,\Gamma^r_{\Lambda})$ is $2r$. Let  $\Delta^{2r}_r$ be the discriminant of the second degree equation in $\l_r$ obtained by forcing the coefficient of $x^{2r}$ to be zero. Then

 	\a \STATE\label{Alg1Stepr.a}{\it STEP r-a)} \  If $\Delta^{2r}_r\neq 0$, there exist $\l_{r1}\neq \l_{r2}$ such that, setting $\Lambda j = (\bar{\l}_1,\bar{\l}_2,...,\bar{\l}_{r-1}, \l_{rj})$, one has   $i(C,\Gamma^r_{ \Lambda j },P)\geq 2r+1$ for $j=1,2$, so $P$ {\bf is a double point for $C$ of type $A_{2r-1}$}.\ {\bf STOP}.

 	\a \STATE\label{Alg1Stepr.b}{\it STEP r-b)} \  If $\Delta^{2r}_r= 0$ there is a unique $\bar{\l}_r$  such that, setting $\bar \Lambda = (\bar{\l}_1,\bar{\l}_2,...,\bar{\l}_{r-1}, \bar \l_r)$, one has $i(C,\Gamma^r_{\bar \Lambda},P)\geq 2r+1$. 
	\dd {\it Step 1-$b_1$)} If $i(C,\Gamma^r_{\overline{\Lambda}},P)= 2r+1$  then $P$ {\bf is a double point for $C$ of type $A_{2r}$}.\ {\bf STOP}. 
	\dd {\it Step 1-$b_2$)} If $i(C,\Gamma^r_{\overline{\Lambda}},P)\geq 2r+2$: {\bf go to Step $r+1$}.
 	
    \end{algorithmic}
\end{boxedminipage}

\end{algorithm}

\bigskip
\begin{rem}  In the algorithm, the curves $\Gamma^r_{\overline{\Lambda}}$ need not to have degree $r$: some of the $\l_i$'s can be zero, or even all of them. For example, if $f(x,y) = y^2-x^5$, then $\Gamma^1_{(\bar \l_1)} = \Gamma^2_{(\bar\l_1,\bar\l_2)} = \{y=0\}$; since $i(C,\Gamma^2_{(\bar\l_1,\bar\l_2)},P)= 5$ this gives a verdict of $A_4$ singularity (actually this is the prototype of an $A_4$).
\end{rem}

\bigskip
\begin{rem}  The curve $C$ is assumed to be reduced, but it can be reducible, so it can happen that $i(C,\Gamma^r_{\bar \Lambda},P)= \infty$, when $\Gamma^r_{\bar \Lambda}$ is a component of $C$. 
For example, let $f(x,y) = y^2-yx^2 = y(y-x^2)$.  The algorithm gives that $\Gamma^1_{(\bar \l_1)}$ is given for $\bar \l_1=0$, and $i(C,\Gamma^1_{(\bar \l_1)},P)= \infty$; then in the next step, looking for $\Gamma^2_{\bar \Lambda}$, we find two possibilities: $\Gamma^2_{(0,0)}$ and $\Gamma^2_{(0,1)}$, both yielding $i(C,\Gamma^2_{\bar \Lambda},P)= \infty$, giving a verdict of $A_3$ singularity (tacnode) for $P$.
\end{rem}

\a {\bf Justification of Algorithm 1 procedure }

\bigskip
The justification relies mainly on Proposition 2.2, as we will see. 

\begin{prop}\label{Justification}
In the Hypotheses of Algorithm \ref{Algorithm1True} the following hold:

\noindent Algorithm \ref{Algorithm1True} stops at Step r-a if and only if P is an $A_{2r-1}$ singularity for $\mathcal{C}$;

\noindent Algorithm \ref{Algorithm1True} stops at Step r-b if and only if P is an $A_{2r}$ singularity for $\mathcal{C}$.
\end{prop}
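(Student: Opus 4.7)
The strategy is to prove both directions simultaneously by induction on $r$, using Proposition \ref{Lemma} to describe the smooth curves that can realize high intersection multiplicity with $C$ at $P$. For the base case $r=1$, the quadratic $(\star)$ in $\lambda_1$ cuts out the tangent directions of $C$ at $P$: the condition $\Delta^2_1\neq 0$ corresponds to two distinct tangents, i.e., to an ordinary node $A_1$, and this identifies Step 1-a with the $A_1$ case. When $\Delta^2_1=0$, the unique tangent means $P$ is an $A_s$ singularity with $s\geq 2$, and Proposition \ref{Lemma}(iii) applied at $r=1$ gives $i(C,\Gamma^1_{(\bar\lambda_1)},P)=3$ precisely for $A_2$, justifying Step 1-$b_1$.

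For the inductive step with $r\geq 2$, assume the algorithm reaches Step $r$. By the induction hypothesis, $P$ is $A_s$ for some $s\geq 2r-1$ and the stored coefficients $\bar\lambda_1,\ldots,\bar\lambda_{r-1}$ coincide with the initial terms (up to $x^{r-1}$) of every smooth curve $D$ satisfying $i(C,D,P)\geq 2r$. A direct expansion shows the coefficient of $x^{2r}$ in $R(C,\Gamma^r_\Lambda)$ is a quadratic polynomial in $\lambda_r$ with leading coefficient $a_{02}\neq 0$; its roots are exactly the $\lambda_r$'s for which $i(C,\Gamma^r_\Lambda,P)\geq 2r+1$. Three mutually exclusive cases, all controlled by Proposition \ref{Lemma}, now decide the outcome: if $P$ is $A_{2r-1}$, parts (ii) and (iv) supply two distinct smooth-curve families with $i\geq 2r+1$, differing exactly in the coefficient $c_r$, so $\Delta^{2r}_r\neq 0$ and the algorithm stops at Step r-a; if $P$ is $A_{2r}$, parts (iii) and (iv) supply a unique such family with $i=2r+1$ exactly, so $\Delta^{2r}_r=0$ and the algorithm stops at Step r-$b_1$; if $P$ is $A_s$ with $s\geq 2r+1$, the normal form $y^2-x^{s+1}=0$ shows there is still a unique $c_r$ but $i\geq 2r+2$ (using Proposition \ref{Lemma}(i): the odd value $2r+1$ is impossible for $i\leq 2r'$ with $r'\geq r+1$), so the algorithm proceeds past Step r-$b_2$ to Step $r+1$. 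Since the three outcomes are mutually exclusive, this establishes both directions of the biconditional at level $r$.

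The main technical obstacle is the verification of the inductive hypothesis, namely that $\bar\lambda_1,\ldots,\bar\lambda_{r-1}$ genuinely match the coefficients $c_2,\ldots,c_{r-1}$ appearing in Proposition \ref{Lemma}(iv). The linear change of coordinates at Step 1-b puts the tangent of $C$ at $y=0$, so Proposition \ref{Lemma}(iv) applies directly in the algorithm's affine coordinates (with $\bar\lambda_1=0$). However, Proposition \ref{Lemma}(iv) is stated only for the specific $r$ with $s=2r-1$ or $s=2r$, so for intermediate steps $k<r$ with deeper singularities $s>2k$ one must establish the analogous uniqueness of $c_2,\ldots,c_k$ separately, either by applying the proposition at the higher level $r'$ corresponding to the actual value of $s$ and truncating, or by direct computation in the normal form $y^2-x^{s+1}=0$: substituting $y\mapsto\phi(x)$ and observing that the forced vanishing orders of $\phi(x)^2-x^{s+1}$ match the algorithm's increments of $2$.
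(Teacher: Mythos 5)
Your proof is correct and follows essentially the same route as the paper's: both arguments rest entirely on Proposition \ref{Lemma} (especially parts (i) and (iv)), distinguishing at step $r$ the mutually exclusive possibilities $A_{2r-1}$, $A_{2r}$, and $A_s$ with $s\geq 2r+1$, which the paper phrases as a forward implication plus elimination of all other types rather than as an induction. Your explicit inductive invariant -- that $\bar\lambda_1,\dots,\bar\lambda_{r-1}$ agree with the forced coefficients $c_2,\dots,c_{r-1}$, together with the separate treatment (by truncation in the normal form $y^2-x^{s+1}=0$) of intermediate steps $k$ where $s>2k$ -- only spells out a point that the paper leaves implicit when it cites Proposition \ref{Lemma}(iv) directly, so it is a more careful write-up of the same argument rather than a different one.
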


\begin{proof} If $P$ is  is an $A_{s}$ singularity, $s\in \{2r-1,2r\}$, the statement follows directly by Proposition 2.2, {\it iv)}. 

\dd  If the algorithm stops at the step $1-a$,  $\l_{11}, \l_{12}$ give the two distinct tangents of an $A_1$ singularity (ordinary node).  If the algorithm stops at the step $1-b$, $\bar \l_1$ gives the unique tangent $\Gamma^1_{\bar \l_1}$ with $i(\mathcal{C},\Gamma^1_{\bar \l_1},P)=3$ of an ordinary cusp $A_2$.

\dd So now suppose $r\geq 2$.

\dd If the algorithm stops at the step $r-a$ the singularity cannot be an $A_{2h-1}$ with $h< r$, because in that case, by Proposition 2.2 {\it iv)}, the Algorithm should ha ve given two different $\Lambda1$, $\Lambda2$ at step $h-a$, nor it can be an $A_{2h}$ since in that case we should have $i(C,\Gamma^r_{\bar \Lambda},O) = 2h+1<2r+1$ .

\b On the other hand, the singularity cannot be an $A_s$, $s\geq 2r$, otherwise we should have  $\Gamma^r_{\Lambda_1} =\Gamma^r_{\Lambda_2} $, by  Proposition 2.2, {\it iv)}. Hence $P$ is an $A_{2r-1}$ singularity. 

\a If the algorithm stops at the step $r-b$, the singularity cannot be an $A_{2h}$ with $h<r$, because in that case we should have $i(C, \Gamma^r_{\overline \Lambda},P)\leq 2h+1<2r+1$, nor it can be an $A_{2h-1}$, $h\leq r$, since at step $h-a$ we should have got two different curves $\Gamma^h_{\Lambda 1}$,$\Gamma^h_{\Lambda 2}$.

\b On the other hand, the singularity cannot be an $A_s$, $s > 2r$, otherwise we should have that   $i(C, \Gamma^r_{\overline \Lambda},P)$ is even by Proposition 2.2 {\it i)}, and not $2r+1$. Hence $P$ is an $A_{2r}$ singularity.

\end{proof}

\section{An example of the use of Algorithm 1}

\a Let  $C_n \subset \PP ^n$ be a rational normal curve projecting on a plane curve $\pi: C_n \to C$ and let
 $O^r_Q(C_n)$ denote the $r$-dimensional osculating space to $C_n$ at the point $Q\in C_n$; recall that $O^r_Q(C_n)$ is the linear span in $\PP^n$ of the curvilinear scheme $(r+1)Q \subset C_n$. In \cite{BGI} we stated the lemma below without any proof, since we considered it as ``common knowledge":

\a {\rm (}\cite{BGI}, {\bf Lemma 4.2}{\rm )}.   {\it Let $P\in C\subset \PP^2$ be a double point on a rational curve of degree $n$, then:
 \begin{enumerate}
\item \label{nodo}The point $P$ is an $A_{2m-1}$, $2m-1<n$ if and only if the scheme $\pi^{-1}(P)$ is given by two distinct points $Q_1,Q_2\in C_n$  and $m$ is the maximum value for which  $\pi(O^{m-1}_{Q_1}(C_n))=\pi(O^{m-1}_{Q_2}(C_n))\neq \PP^2$.

\item\label{cuspfacile} The point  $P$ is an $A_{2m}$, $2m-1 < n$, if and only if the scheme $\pi^{-1}(P)$ is given by the divisor  $2Q\in C_n$  and $m$ is the maximum value for which  $\pi(O^{2m-1}_{Q}(C_n))\neq \PP^2$.
\end{enumerate}
\noindent When $m\geq 2$, we will have that the image $\pi(O^{m-1}_{Q_1}(C_n))=\pi(O^{m-1}_{Q_2}(C_n))$, (respectively $\pi(O^{2m-1}_{Q}(C_n))$) is $T_P(C)$, the unique tangent line to $C$ at $P$.}

\a This lemma, which luckily enough is never used in \cite{BGI}, is actually wrong, but it sounded quite convincing not only for us, since neither the referee of \cite{BGI} (which otherwise made a quite thorough job), nor several colleagues with whom we talked about it during the making of the paper realized that it does not hold.

\a Here is the rationale which explains while the Lemma does not hold: assume $m>1$ and that we are in case $(1)$ (but analogous considerations may be done in case $(2)$); consider the linear spans, giving the osculating spaces: $\langle mQ_1  \rangle = O^{m-1}_{Q_1}(C_n), \langle mQ_2  \rangle  = O^{m-1}_{Q_2}(C_n)$. The Lemma states that the projection of these two osculating spaces to the curve $C_n$ is not the whole of $\mathbb{P}^2$, so it has to be the unique tangent line $t$ to $C$ at $P$; but if this were true, the projection $X$ of the two curvilinear schemes $mQ_1$, $mQ_2$ would  be contained in a line (the tangent $t$), and this is not true in general, as the next example, which makes use of Algorithm 1 of section 5, shows.

\begin{ex}\label{controes}
Consider the quartic curve $C $ given by the affine equation $y^2-2x^2y+x^4+x^2y^2=0$. If we run Algorithm 1 on it, we find a unique tangent $y=0$ in Step 1, a unique osculating conic $\Gamma: y=x^2$ in Step 2, and two distinct osculating cubics in Step 3: 
$$D_1: y=x^2-ix^3,\quad D_2: y=x^2+ix^3$$
Hence Algorithm 1 gives that $O$  is an oscnode for $C$ (an $A_5$ double point, so here $m=3$), and $C$, being a quartic with an oscnode, is rational, with two branches at $O$  approximated by $D_1$ and $D_2$. Let's denote by $C_4\subset \PP ^4$ a rational normal curve which projets onto $C$.

We have $i(C,D_j,O)=7$ for $j=1,2$ and $i(D_1,D_2,O)=3$, in accord with Proposition \ref{Lemma} $(ii)$, and the length 3 curvilinear scheme $X$ of Theorem 4.3 in \cite{BGI}, or of Theorem \ref{2mtom notrat} where we take $\tilde C=C_4$, is given by $D_1\cap D_2$, hence it is associated to the ideal $(y-x^2-ix^3,y-x^2+ix^3)=(y-x^2,x^3)$.
Hence $X$  is contained in the osculating conic $\Gamma$ and not contained in the tangent line; but, according to Theorem \ref{2mtom notrat}, $X$ is the projections of the curvilinear length 3 schemes $3Q_1$ and $3Q_2$ of  $C_4\subset \PP ^4$; so we conclude that $3Q_1$ and $3Q_2$ are not projected inside the tangent $y=0$.

\end{ex}

\section{Singular points on a rational plane curve via parameterization}\label{Xk}

\b In \cite{BGI} we expose a way to determine the nature of a singularity on a rational plane curve $C$, given a parameterization of $C$,  without using its implicit equation or the syzygies of the parameterization (e.g. as in \cite{CKPU}); in this section we recall a few result from \cite{BGI}.

\begin{defn}\label{defXk} Let $C\subset \mathbb{P}^2$ be a rational curve of degree $n\geq 3$, given by a map  ${\mathbf{f}}= (f_0,f_1,f_2): \PP ^1 \to C$, and assume that the parameterization $(f_0, f_1, f_2)$ is proper, i.e. ${\mathbf{f}}$ is generically 1:1 and the $f_i$'s do not have common zeroes. Let $$f_j = a_{j0}s^n + a_{j1}s^{n-1}t + \cdots + a_{jn}t^n, \quad j=0,1,2$$
Consider the $(n-k+4)\times (n+1)$ matrices:

$$
M_k=  \begin{pmatrix} x_0&x_1&\ldots &x_k&0&0&\cdots &0 \cr
0&x_0&x_1&\cdots &x_k&0&\cdots&0
\cr  & &\ddots&&  & & &  \cr
0&\cdots&0&x_0&x_1&\cdots &\cdots &x_k\cr
 a_{00}& a_{01}& a_{02}& a_{03}& \cdots& \cdots & a_{0n-1} &a_{0n}\cr
a_{10}& a_{11}& a_{12}& a_{13}& \cdots & \cdots & a_{1n-1} &a_{1n}\cr a_{20}& a_{21}& a_{22}& a_{23}& \cdots & \cdots& a_{2n-1} &a_{2n}
\end{pmatrix}.
$$
\dd For $2\leq k \leq n-1$, we denote by  $X_k\subset \PP^k$  the scheme defined by the $(n-k+3)$-minors of $M_k$ .
\end{defn}

\a The following Proposition (see \cite{BGI}, Prop. 2.2) shows how the $X_k$'s are related to the $k$-uple points of $C$:

\begin{prop}\label{Singpoint}
Let $C\subset \PP^2$ be a rational curve of degree $n\geq 3$. The schemes $X_k$ introduced in Definition \ref{defXk} are either 0-dimensional or empty. Moreover:
\begin{itemize}
\item $\forall \, k$, $2\leq k\leq n-1$, $X_k$ is non-empty iff there is at least a singular point on $C$ of multiplicity $\geq k$.

\item  Every singular point of $C$ yields at least a simple point of $X_2$ and $$length X_2 =  {n-1\choose 2}$$ (notice that $X_2$ is never empty since $n\geq 3$).
\end{itemize}
\end{prop}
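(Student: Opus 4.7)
The plan is to reinterpret $M_k$ intrinsically. Associate to the homogeneous coordinates $[x_0:\dots:x_k]\in \PP^k$ the binary form $g(s,t)=x_0 s^k + x_1 s^{k-1}t+\dots+x_k t^k$ of degree $k$. In the monomial basis of $H^0(\PP^1,\OO(n))$, the top $n-k+1$ rows of $M_k$ are precisely the matrix of the multiplication map $\mu_g\colon H^0(\PP^1,\OO(n-k))\to H^0(\PP^1,\OO(n))$, which is injective whenever $g\neq 0$; so these rows are linearly independent and their span equals the subspace $g\cdot H^0(\PP^1,\OO(n-k))$. The last three rows encode $f_0,f_1,f_2\in H^0(\PP^1,\OO(n))$. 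Hence $\rank(M_k)\le n-k+2$ if and only if the images of $f_0,f_1,f_2$ in the quotient $H^0(\PP^1,\OO(n))/(g\cdot H^0(\PP^1,\OO(n-k)))$ span a subspace of dimension at most one, equivalently there exist two linearly independent triples $(\mu_0,\mu_1,\mu_2)\in \CC^3$ with $g\mid \mu_0 f_0+\mu_1 f_1+\mu_2 f_2$.

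Next I would read this off geometrically. Given two independent triples $\mu^1,\mu^2$ as above, the lines $\ell_j\colon \mu_0^j y_0+\mu_1^j y_1+\mu_2^j y_2=0$ in $\PP^2$ meet in a single point $P$, and every zero of $g$ is sent by $\mathbf{f}$ into $\ell_1\cap\ell_2=\{P\}$; scheme-theoretically $V(g)\subset \mathbf{f}^{-1}(P)$, so $P\in C$ has multiplicity at least $k$. Conversely, if $P\in C$ has multiplicity $m\ge k$, any length-$k$ subscheme of $\mathbf{f}^{-1}(P)$ is cut out by some degree-$k$ form $g$, and any two independent linear forms vanishing at $P$ give two independent triples divisible by $g$, so $g\in X_k$. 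Note that $\gcd(f_0,f_1,f_2)=1$ by properness, so $g$ cannot divide every combination and the 2-dimensional relation space is genuinely constrained. Set-theoretically $X_k$ is thus in bijection with pairs $(P,Z)$ with $P\in C$ singular of multiplicity $\ge k$ and $Z\subset \mathbf{f}^{-1}(P)$ a length-$k$ subscheme; this set is finite, so $X_k$ is zero-dimensional and non-empty iff some singular point has multiplicity $\ge k$. The same correspondence shows every singular point of $C$ yields at least a simple point of $X_2$, since for multiplicity $2$ there is a unique length-$2$ subscheme of $\mathbf{f}^{-1}(P)$.

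Finally, for the length of $X_2$ being $\binom{n-1}{2}$, I would argue by deformation. $X_2$ is cut out of $\PP^2$ by the maximal minors of an $(n+2)\times(n+1)$ matrix, and its expected codimension equals its actual codimension $2$; by Hilbert--Burch it is Cohen--Macaulay, and its total length is invariant as the parameterization $\mathbf{f}$ varies in the locus where $\dim X_2 = 0$. A generic rational plane curve of degree $n$ has exactly $\binom{n-1}{2}$ ordinary nodes, each contributing length $1$ to $X_2$ by the bijection above; so the length of $X_2$ is $\binom{n-1}{2}$ in general. Alternatively, identifying the local length of $X_2$ at each singular point $P$ with the $\delta$-invariant $\delta_P$ and using $\sum_P \delta_P = p_a(C)-g(C)=\binom{n-1}{2}$ yields the same conclusion. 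The main obstacle here is the local length computation at non-nodal singularities, where $X_2$ is scheme-theoretically non-reduced at the corresponding point; for this reason the cleanest route is the Cohen--Macaulay plus specialization argument rather than a case-by-case analysis of the $A_s$ normal forms.
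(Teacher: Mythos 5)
First, note that this paper does not actually prove Proposition \ref{Singpoint}: it is quoted verbatim from [BGI, Prop.\ 2.2], so there is no in-paper proof to measure you against. Judged on its own, your argument is essentially the right one, and its core is correct. The identification of the top $n-k+1$ rows of $M_k$ with the (injective) multiplication map $g\cdot H^0(\OO_{\PP^1}(n-k))\hookrightarrow H^0(\OO_{\PP^1}(n))$, the equivalence of the rank drop with the existence of a $2$-dimensional space of triples $\mu$ with $g\mid \mu_0f_0+\mu_1f_1+\mu_2f_2$, the exclusion of the degenerate case via $\gcd(f_0,f_1,f_2)=1$, and the resulting bijection between points of $X_k$ and pairs $(P,Z)$ with $Z\subset\mathbf{f}^{-1}(P)$ of length $k$ (using that $\deg\mathbf{f}^{-1}(P)=m_P(C)$ for a proper parameterization) are all sound, and they give zero-dimensionality, the non-emptiness criterion, and the fact that every singular point contributes at least one point to $\Supp(X_2)$.

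The one genuine soft spot is the final step of the length computation. The Eagon--Northcott/Hilbert--Burch part is fine: once $X_2$ has the expected codimension $2$, its length depends only on the degrees of the rows of $M_2$, so it suffices to evaluate it on one instance. But your evaluation on a generic nodal curve asserts that each node contributes length exactly $1$ ``by the bijection above,'' and that bijection is only set-theoretic; it bounds the length at each point from below by $1$, not from above, so as written you only get $\mathrm{length}\,X_2\ge\binom{n-1}{2}$. You flag exactly this issue for the $\delta$-invariant route but the specialization route does not escape it. The cleanest repair is to skip the specialization entirely and compute the universal number: $M_2$ is a map $\OO(-1)^{n-1}\oplus\OO^{3}\to\OO^{n+1}$ on $\PP^2$, its non-surjectivity locus has expected codimension $2$, and Thom--Porteous gives the class $\bigl(c_1^2-c_2\bigr)\bigl(\OO^{n+1}-\OO(-1)^{n-1}-\OO^3\bigr)$; since the total Chern class of the virtual difference is $(1-h)^{-(n-1)}$, this equals $\bigl((n-1)^2-\binom{n}{2}\bigr)h^2=\binom{n-1}{2}h^2$. (Alternatively, a direct local computation of the ideal of minors at a node closes the gap; e.g.\ for the cuspidal cubic $(s^3,s^2t,t^3)$ the minor ideal is exactly $(x_1,x_2)$, of length $1$.) With that one step supplied, your proof is complete.
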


\b There are several properties of the singularities of $C$ which are quite immediate to check using the schemes $X_k$ (see \cite{BGI}, Prop. 3.1 and Prop. 4.4); we report some of them in the following proposition \ref{Singpoint2}, where, if $P\in \Sing(C)$, $\delta_P$ denotes the number 
$\delta_P = \sum_q {m_q\choose 2}$  where $q$ runs over all points infinitely near $P$, and $\mathcal{C}_2$ is the conic $y^2-4xz=0$. 
\begin{prop}\label{Singpoint2}
 Let $C\subset \PP^2$ be a rational curve, given by a proper parameterization $(f_0,f_1,f_2)$, with $f_i\in K[s,t]_n$. Let  $\mathcal{C}_2, X_2$ be as defined before. Then:

\begin{itemize}
\item $C$ is cuspidal if and only if $\Supp(X_2) \subset \mathcal{C}_2$ (in this case, the number of singular points of $C$ is exactly the cardinality of $\Supp(X_2)$).

\item $C$ has only ordinary singularities if and only if the scheme $X_2$ is reduced and $X_2\cap \mathcal{C}_2 = \emptyset$.

\item  Let $C$ have only double points as singularities and let $R\in X_2$ and $P\in \Sing(C)$ be the point associated to $R$. Then $length_R(X_2) = \delta_P$.
\end{itemize}

\end{prop}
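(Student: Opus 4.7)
The plan is to exploit a geometric interpretation of $X_2$. A point $R=[x_0:x_1:x_2]\in\PP^2$ encodes the binary quadratic form $\phi_R(s,t)=x_0 s^2+x_1 st+x_2 t^2$, and the Sylvester-like top rows of $M_2$ span precisely the $\phi_R$-multiples inside $\CC[s,t]_n$; so setting the maximal minors of $M_2$ to zero is equivalent to requiring that some nonzero linear combination of $f_0,f_1,f_2$ be divisible by $\phi_R$. Unpacking via duality (a linear functional on $\CC[s,t]_n$ annihilating all $\phi_R$-multiples is, up to scalar, an evaluation at a root of $\phi_R$, or a derivative evaluation in the double-root case), $R\in\Supp(X_2)$ exactly when the pair of roots $\{u,v\}$ of $\phi_R$ in $\PP^1$ satisfies $\mathbf{f}(u)=\mathbf{f}(v)$ in $\PP^2$. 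The discriminant conic $\mathcal{C}_2:x_1^2-4x_0x_2=0$ is precisely the locus where $\phi_R$ has a double root, i.e.\ where $u=v$, which forces $u$ to be a critical point of $\mathbf{f}$ and hence a cusp of $C$.

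With this interpretation, items (1) and (2) become essentially tautological. For (1), $C$ is cuspidal iff every singular point has a single preimage (as a set), which is exactly $\Supp(X_2)\subset\mathcal{C}_2$; and in that case the map $R\mapsto\mathbf{f}(\text{double root of }\phi_R)$ is a bijection $\Supp(X_2)\to\Sing(C)$. For (2), an ordinary multiplicity-$k$ point contributes $\binom{k}{2}$ distinct reduced points of $X_2$, one per unordered pair of its $k$ distinct preimages, each off $\mathcal{C}_2$; conversely, if $X_2$ is reduced and avoids $\mathcal{C}_2$, each point of $\Supp(X_2)$ records two distinct parameters collapsing to the same image with independent tangent directions, which is the definition of an ordinary singularity.

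The main technical step is (3). With $C$ having only double points, fix $P\in\Sing(C)$ with $\pi^{-1}(P)=\{u,v\}$ (possibly $u=v$) and the corresponding $R=R_P\in X_2$. The strategy is to pick local analytic coordinates at $u$ and $v$ on $\PP^1$ and at $R$ on $\PP^2$, Taylor-expand the $(n+1)$-minors of $M_2$ near $R$, and compare their orders of vanishing with the order of contact between the two branches of $C$ at $P$ (or the order of self-osculation in the cuspidal case). Repeated blow-up at $P$ converts these orders into the multiplicity sequence $(m_q)_q$ at $P$ and its infinitely near points, and a direct bookkeeping identifies $length_R(X_2)$ with $\sum_q\binom{m_q}{2}=\delta_P$. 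The hard part is carrying out this local analysis cleanly without drowning in indices; this is exactly what is done in \cite{BGI}, Prop.~4.4, and I would invoke (and reference) that computation rather than redo it from scratch.
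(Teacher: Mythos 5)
The paper itself gives no proof of this proposition: it is reported verbatim from \cite{BGI} (Prop.~3.1 and Prop.~4.4), so your decision to delegate the technical core of item (3) to \cite{BGI}, Prop.~4.4 puts you on exactly the same footing as the authors, and your sketch of items (1)--(2) is a reasonable reconstruction of what lies behind the cited results. One intermediate step is misstated, though it does not damage the conclusion you actually use: $M_2$ is an $(n+2)\times(n+1)$ matrix whose maximal minors vanish precisely when its rank drops to $n$; since the top band (multiplication by $\phi_R=x_0s^2+x_1st+x_2t^2$) has rank $n-1$ and $\langle f_0,f_1,f_2\rangle$ has dimension $3$, the condition is \emph{not} that some nonzero combination of the $f_j$ be divisible by $\phi_R$ --- that holds for every $R$, because $(n-1)+3>n+1$ --- but that a two-dimensional space of such combinations be divisible by $\phi_R$. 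It is this stronger condition that, via the evaluation functionals at the roots $u,v$ of $\phi_R$, is equivalent to $\mathbf{f}(u)=\mathbf{f}(v)$, which is the characterization of $\Supp(X_2)$ you rely on afterwards; so the slip is local and fixable. The remaining soft spot is the converse direction of item (2): that reducedness of $X_2$ at $R$ forces the two branches to meet transversally is precisely the kind of local length computation carried out for item (3) (an ordinary node being the case $\delta_P=1$), so it too ultimately rests on the computation in \cite{BGI} rather than on anything proved in your sketch --- which, again, matches the level of detail the paper itself offers.
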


\a The algorithms given in \cite{BGI} describe how to use the scheme $X_2$ in order to study double points; in the next section we show an example of how to construct a desired curve with a double point of type $A_m$.

\a 
\section{An example of the use of techniques of section \ref{Xk}}

 \a In this section, with \cite{CoCoA}, we build an example  which illustrates how to build a plane rational curve with a double point of a chosen type using projection techniques, and how to use the results of section \ref{Xk} to study the singularities of a rational curve, given its parameterization. This example also gives another counterexample to  Lemma 4.2 in  \cite{BGI}.

\begin{ex}\label{excocoa}

In the first part of this example we construct a rational sextic   $C\subset \PP^2$  with an $A_5$ singularity  $P$ (an $oscnode$), and we show that it is a counterexample to  \cite{BGI} 4.2. In the second part we construct the scheme $X_2$ relative to our curve and we complete the study of the singular locus of the curve.
\dd {\bf Part I} We use Theorem \ref{2mtom notrat} as a guide; hence, in order to obtain an $A_5$ singularity, we want to view our curve as the projection of a rational normal curve $C_6\subset \PP^6$, with center a linear space $\Pi \cong \PP^3$, onto a plane $H\subset \PP^6$, in such a way that two points $Q_1$ and $Q_2$ on $C_6$  have the same image $P$, and moreover the two curvilinear schemes $3Q_1$, $3Q_2$ on $C_6$ have a curvilinear scheme $X \subset C$ of length three as their projection, with $X$ not contained in a line.
\dd The idea is the following: let $Q_1 = [0,0,0,0,0,0,1]$, $Q_2 = [1,0,0,0,0,0,0]$, and let $L$  be the line through them. The ideal $I_L^3 + I_{C_6}$ in $K[z_0,...,z_6]$ defines the required curvilinear scheme $3Q_1 + 3Q_2$. We want to project $C_6$ from a 3-dimensional space $\Pi$ into a plane $H$, choosing $\Pi$ in such a way that the projection $\pi : C_6 \rightarrow C$ is generically 1:1; $\Pi$ does not intersect $C_6$ and  intersects $L$ at one point, so that $\pi(Q_1)=\pi(Q_2)$; it does not intersect  the two osculating spaces  $O_{Q_i}^2(C_6)$, so that $\pi(O_{Q_1}^2(C_6))=\pi(O_{Q_2}^2(C_6))=H$; the image of $3Q_1+3Q_2$ is a curvilinear scheme of length 3 contained in $C$; $\pi(4Q_1+4Q_2)$ is not a degree 4 curvilinear scheme on $C$.  

\dd If we manage to do so, the curve $C =\pi(C_6)\subset H$  will have an $A_5$ singularity in  $P=\pi(Q_i)$ by Theorem \ref{2mtom notrat}. Moreover, the image $\pi(O_{Q_i}^2(C_6))$, $i=1,2$, will not be contained in a line, contradicting Lemma 4.2 of \cite{BGI}. 

\a In the following we describe the procedure by using the program CoCoA (see \cite{CoCoA}):

\bigskip Use R::= QQ[$a,b,c,d,e,f,g$];

This is the ring of coordinates of $\PP^6$.

\medskip

IL:= Ideal$(b,c,d,e,f)^3$;

\noindent This is the ideal of the ``triple line" $L$ through the points $A=[1,0,0,0,0,0,0]$ and $ B=[0,0,0,0,0,0,1]$  in $\PP^6$.

\medskip

IC6 = Ideal($ac-b^2,ad-bc,ae-bd,bd-c^2,be-cd,ce-d^2,df-e^2,de-cf,ce-bf,af-be, ag-bf,bg-cf,cg-df,dg-ef,eg-f^2$);

IP:= IL+ IC6;

\noindent This is the ideal $I_L^3+I_{C_6}$ of the curvilinear scheme   $3A+3B$, supported on  $C_6$.

\medskip

\noindent Now we consider the space  $\Pi \cong \PP^3$ whose ideal is $ (a+g, 3f-b-d, 9e+c-d)$; $\Pi$ intersects $L$ in a point and does not intersect the two osculating planes $ O^2_A(C_6)$ (whose ideal is $(d,e,f,g)$) and $O^2_B(C_6)$ (whose ideal is $(a,b,c,d)$). We want to project with center $\Pi$ on the plane with coordinates $u,v,w$, where $u=a+g,v=3f-b-d,w=9e+c-d$.

\medskip

Use R::= QQ[$a,b,c,d,e,f,g,u,v,w$];

IS:= IP+Ideal($u-a-g,v-3f+b+d,w-9e-c+d$);

IIS:=Saturation(IS,Ideal($a,b,c,d,e,f,g$));

Elim($a..g$,IIS);

Ideal($w^2, vw, v^2 - uw$)

\noindent The projection of $3A+3B$ is a scheme whose ideal $(w^2, vw, v^2 - uw)$ shows that it is supported at  $P=[1,0,0]$, it has length 3 (it is generated by 3 independent conics), it is curvilinear (it is contained in a smooth conic) and is not on a line, hence  we have a good candidate for an $A_5$ at $P$.

\medskip

\noindent Let us check that $P$ is not an $A_7$, we will go through the same steps starting with the ideal $I_L^4$:

IL4:= Ideal$(b,c,d,e,f)^4$;

IP4:= IL+IC6;

IS4:= IP+Ideal(u-a-g,v-3f+b+d,w-9e-c+d);

IIS4:=Saturation(IS,Ideal(a,b,c,d,e,f,g));

Elim(a..g,IIS4);

Ideal($w^2, 9/28v^2w, 9/28v^3 - 9/28uvw$)

\medskip

\noindent The ideal we got is not the ideal of a curvilinear scheme, since the three curves defined by  $w^2, 9/28v^2w$ and $9/28v^3 - 9/28uvw$ are not smooth at $P$, and it can be checked that it has lenght 5. Hence $P$ is not an $A_7$ by Theorem \ref{2mtom notrat}.

\medskip

We are left to check that the projection is generically 1:1; for this it is enough to find a smooth point of the plane curve $C$ which comes from only one point of $C_6$ via $\pi$.  Let us consider the point $R=[1,1,1,1,1,1,1]\in C_6$:

Use R::= QQ[$a,b,c,d,e,f,g,u,v,w$];

IR:= Ideal($a-b,b-c,c-d,d-e,e-f,f-g$);

IR1:= I+ Ideal($u-a-g,v-3f+b+d,w-9e-c+d$);

IIR1:=Saturation(I1,Ideal($a,b,c,d,e,f,g$));

Elim(a..g,II1);

Ideal($v - 1/9w, u - 2/9w$)

\noindent This shows that $R$ projects to the point $[1,2,9]\in C$. Now we consider the (4-dimensional) cone on $\Pi$ with vertex $[1,2,9]$ and we intersect it with $C_6$.

Use R::= QQ[$a,b,c,d,e,f,g$];

ICONO:= Ideal($9e+c-d-27f+9b+9d,18e+2c-2d -9a-9g$);

IC6 := Ideal($ac-b^2,ad-bc,ae-bd,bd-c^2,be-cd,ce-d^2,df-e^2,de-cf,ce-bf,af-be, ag-bf,bg-cf,cg-df,dg-ef,eg-f^2$);

I:= IC6+ICONO;

II:=Saturation(IP,Ideal($a,b,c,d,e,f,g$));

Print II;

Ideal($a - g, b - g, c - g, d - g, e - g, f - g$)

\noindent Hence the cone intersects $C_6$ only in $R$ (simply), and so $\pi$ is generically 1:1.

\a {\bf Part II}  The singularity of $C$ at $P$ is now known; let us complete the study of the curve $C$ by checking what the other singularities are.
Since the ideal of $\Pi $ is $ (a+g, 3f-b-d, 9e+c-d)$, the parametric equations  of $C$ are: 
$$u= s^6 + t^6\quad  ;\  v= -s^5t+3st^5-s^3t^3\quad ;\  w = 9s^2t^4 + s^4t^2-s^3t^3$$

\noindent We want to study the scheme $X_2 \subset \PP^2$, defined by the $7\times 7$ minors of $M_2$:

 \dd Use R::= QQ[$x,y,z$];

M:=Mat($[[x,y,z,0,0,0,0],[0,x,y,z,0,0,0],[0,0,x,y,z,0,0],[0,0,0,x,y,z,0],$

$[0,0,0,0,x,y,z],[1,0,0,0,0,0,1],[0,-1,0,-1,0,3,0],[0,0,1,-1,9,0,0]]$);

MM:=Minors(7,M);

IX2:=Ideal(MM);

\noindent This is the ideal of the scheme $X_2$.

\dd Hilbert(R/IX2);

H(0) = 1

H(1) = 3

H(2) = 6

H(t) = 10   for t $\geq$ 3

\noindent $ X_2 $ has lenght 10, as expected ($C$ is a rational sextic).

\dd IZ2:=Radical(IX2);

Hilbert(R/IZ2);

H(0) = 1

H(1) = 3

H(2) = 6

H(t) = 8   for t $\geq$ 3

\dd $X_2$ has support at $Z_2$ which is made of 8 points, hence Sing$C$ is made of  our $A_5$ supported on $P$ plus 7 double points of type $A_1$ or $A_2$; to decide if they are nodes or cusps, we proceed as follows:

\a ICUSP:= IX2+Ideal($y^2-4xz$);

Hilbert(R/ICUSP);

H(0) = 1

H(1) = 3

H(2) = 5

H(3) = 7

H(4) = 4

H(t) = 0   for t $\geq$ 5

\a  $X_2$ does not intersect the conic which is the locus of points parameterizing tangent lines of $C_6$, hence the seven simple points are all ordinary nodes $A_1$.

\end{ex}


\end{document}